\providecommand{\U}[1]{\protect\rule{.1in}{.1in}}
\newtheorem{theorem}{Theorem}[section]
\newtheorem{proposition}[theorem]{Proposition}
\newtheorem{example}[theorem]{Example}
\newtheorem{remark}[theorem]{Remark}
\newtheorem{lemma}[theorem]{Lemma}
\begin{document}

\title{\textsc{Dominated bilinear forms and 2-homogeneous polynomials}}
\author{Geraldo Botelho\thanks{Supported by CNPq Grant 306981/2008-4.}~, Daniel Pellegrino\thanks{Supported by INCT-Matem\'{a}tica, CNPq Grants 620108/2008-8 (Ed. Casadinho), 308084/2006-3 (Ed. Universal) and 471686/2008-5.}~ and Pilar Rueda\thanks{Supported by Ministerio de
Ciencia e Innovaci\'{o}n MTM2008-03211/MTM. \hfill\newline This
paper will appear in Publications of the Research Institute for
Mathematical Sciences.\hfill\newline2000 Mathematics Subject
Classification: 46G25, 46B20, 46B28.}}
\date{}
\maketitle

\begin{abstract}
The main goal of this note is to establish a connection between the cotype of the Banach space $X$ and the parameters $r$ for which every 2-homogeneous polynomial on $X$ is $r$-dominated. Let $\cot X$ be the infimum of the cotypes assumed by $X$ and $(\cot X)^*$ be its conjugate. The main result of this note asserts that if $\cot X > 2$, then for every $1 \leq r < (\cot X)^*$ there exists a non-$r$-dominated 2-homogeneous polynomial on $X$.
\end{abstract}

%\vspace*{-1.0em}

\section*{Introduction}
The notion of $p$-dominated multilinear mappings and homogeneous polynomials between Banach
spaces plays an important role in the nonlinear theory of absolutely
summing operators. It was introduced by Pietsch \cite{pietsch} and has been investigated by several
authors since then (see, e.g., \cite{jfa1, proc} and references therein).

Let $X$ be a Banach space and $m$ be a positive integer. A continuous $m$-linear form $A$  on $X^m$  is $r$-dominated if  $(A(x_{j}^1, \ldots, x_j^m))_{j=1}^{\infty} \in \ell_{\frac{r}{m}}$  whenever $(x_{j}^1)_{j=1}^{\infty},  \ldots, (x_{j}^m)_{j=1}^{\infty}$ are  weakly $r$-summable in $X$. In a similar way, a scalar-valued $m$-homogeneous polynomial $P$ on $X$ is $r$-dominated if $(P(x_{j}))_{j=1}^{\infty} \in \ell_{\frac{r}{m}}$ whenever  $(x_{j})_{j=1}^{\infty}$ is weakly $r$-summable in $X$.

In \cite[Lemma 5.4]{Jar} it is proved that for every infinite-dimensional Banach space $X$, every $p \geq 1$ and every $m \geq 3$, there exists a continuous non-$p$-dominated $m$-linear form on $X^m$. For polynomials the question has been recently settled in \cite{proc}, where it is proved that for every infinite-dimensional Banach space $X$, every $p \geq 1$ and every $m \geq 3$, there exists a continuous non-$p$-dominated scalar-valued $m$-homogeneous polynomial on $X$. So, coincidence situations can occur only for $m = 2$. Sometimes it happens that every continuous bilinear form on $X^2$ is 2-dominated, for example if $X$ is either an ${\cal L}_\infty$-space, the disc algebra $\cal A$ or the Hardy space $H^\infty$ (see \cite[Proposition 2.1]{PAMS}). In this case every continuous bilinear form on $X^2$ and every continuous scalar-valued 2-homogeneous polynomial on $X$ are $r$-dominated for every $r \geq 2$. But what about $r$-dominated bilinear forms and 2-homogeneous polynomials for $1 \leq r < 2$? \\
\indent Those spaces $X$ that enjoy the property that all bilinear forms on $X^2$ are 1-dominated are all of cotype 2 (Example \ref{ex}). In Proposition \ref{propos} we see that having cotype $2 + \varepsilon$ for every $\varepsilon > 0$ is a necessary condition. So, for a space $X$ such that $\cot X > 2$ it is natural to investigate how close $r$ can be to 1 with the property that every bilinear form on $X^2$ (or 2-homogeneous polynomial on $X$) is $r$-dominated. For bilinear forms it is not difficult to see (Proposition \ref{proposition}) that $(\cot X)^*$, the conjugate of the number $\cot X$, is the closest $r$ can be to 1. As usual, for polynomials the situation is more delicate. In the main result of this paper, Theorem \ref{coro11}, we prove that the estimate $(\cot X)^*$ holds for 2-homogeneous polynomials as well. We also point out that this result is somehow sharp.

\section{Notation}

Throughout this paper $n$ and $m$ are positive integers, $X$ and $Y$ will
stand for Banach spaces over $\mathbb{K}=\mathbb{R}$ or $\mathbb{C}$. The
Banach spaces of all continuous $m$-linear mappings $A \colon X^m \longrightarrow Y$ and continuous $m$-homogeneous polynomials $P\colon
X\longrightarrow Y,$ endowed with the usual $\sup$ norms, are denoted by $\mathcal{L}(^{m}X;Y)$ and $\mathcal{P}(^{m}X;Y)$, respectively ($\mathcal{L}(X;Y)$ if $m=1$). When $m=1$ and $Y=\mathbb{K}$ we
write $X^{\ast}$ to denote the topological dual of $X$. The closed unit
ball of $X$ is represented by $B_{X}.$ The notation $\cot X$ denotes the
infimum of the cotypes assumed by $X$. The identity operator on $X$ is denoted
by $id_{X}$. For details on the theory of multilinear mappings and homogeneous polynomials between Banach spaces we
refer to \cite{Di, Mu}.

Given $r\in\lbrack0,\infty)$, let $\ell_{r}(X)$ be the Banach ($r$-Banach if
$0 < r < 1$) space of all absolutely $r$-summable sequences $(x_{j}%
)_{j=1}^{\infty}$ in $X$ with the norm $\Vert(x_{j})_{j=1}^{\infty}\Vert
_{r}=(\sum_{j=1}^{\infty}\Vert x_{j}\Vert^{r})^{1/r}$. We denote by $\ell
_{r}^{w}(X)$ the Banach ($r$-Banach if $0 < r < 1$) space of all weakly
$r$-summable sequences $(x_{j})_{j=1}^{\infty}$ in $X$ with the norm
$\Vert(x_{j})_{j=1}^{\infty}\Vert_{w,\,r}=\sup_{\varphi\,\in\,B_{X^{\ast}}%
}\Vert(\varphi(x_{j}))_{j=1}^{\infty}\Vert_{r}$.

Let $p,q>0.$ An $m$-linear mapping $A\in\mathcal{L}(^{m}X;Y)$ is
absolutely $(p;q)$-summing if $\left(  A(x_{j}^1, \ldots, x_j^m)\right)  _{j=1}^{\infty}\in
\ell_{p}(Y)$ whenever $\left(  x_{j}^1\right)  _{j=1}^{\infty},\ldots, \left(  x_{j}^m\right)  _{j=1}^{\infty}\in\ell_{q}%
^{w}(X).$ It is well-known that $A$ is absolutely $(p;q)$-summing if and only
if there is a constant $C\geq0$ so that%
\[
\left(
%TCIMACRO{\tsum \limits_{j=1}^{n}}%
%BeginExpansion
{\sum\limits_{j=1}^{n}}
%EndExpansion
\left\Vert A(x_{j}^1, \ldots, x_j^m)\right\Vert ^{p}\right)  ^{\frac{1}{p}}\leq C \prod_{k=1}^m
\Vert(x_{j}^k)_{j=1}^{n}\Vert_{w,\,q}%
\]
for every $x_{1}^k,\ldots,x_{n}^k\in X$, $k = 1, \ldots, m$, and $n$ positive integer. The infimum of
such $C$ is denoted by $\Vert A\Vert_{as(p;q)}.$ The space of all
absolutely $(p;q)$-summing $m$-linear mappings from $X^m$ to $Y$ is
denoted by $\mathcal{L}_{as(p;q)}(^{m}X;Y)$ and $\Vert\cdot\Vert_{as(p;q)}$ is
a complete norm ($p$-norm if $p<1$) on $\mathcal{L}_{as(p;q)}(^{m}X;Y)$.\\
\indent An $m$-homogeneous polynomial $P\in\mathcal{P}(^{m}X;Y)$ is
absolutely $(p;q)$-summing if the symmetric $m$-linear mapping associated to $P$ is absolutely $(p;q)$-summing, or, equivalently, if $\left(  P(x_{j})\right)  _{j=1}^{\infty}\in
\ell_{p}(Y)$ whenever $\left(  x_{j}\right)  _{j=1}^{\infty}\in\ell_{q}%
^{w}(X).$ It is well-known that $P$ is absolutely $(p;q)$-summing if and only
if there is a constant $C\geq0$ so that%
\[
\left(
%TCIMACRO{\tsum \limits_{j=1}^{n}}%
%BeginExpansion
{\sum\limits_{j=1}^{n}}
%EndExpansion
\left\Vert P(x_{j})\right\Vert ^{p}\right)  ^{\frac{1}{p}}\leq C\left(
\Vert(x_{j})_{j=1}^{n}\Vert_{w,\,q}\right)  ^{m}%
\]
for every $x_{1},\ldots,x_{n}\in X$ and $n$ positive integer. The infimum of
such $C$ is denoted by $\Vert P\Vert_{as(p;q)}.$ The space of all
absolutely $(p;q)$-summing $m$-homogeneous polynomials from $X$ to $Y$ is
denoted by $\mathcal{P}_{as(p;q)}(^{m}X;Y)$ and $\Vert\cdot\Vert_{as(p;q)}$ is
a complete norm ($p$-norm if $p<1$) on $\mathcal{P}_{as(p;q)}(^{m}X;Y)$.
%When
%$m=1$ we write $\mathcal{L}_{as(p;q)}(X;Y)$ ($\mathcal{L}_{as,p}(X;Y)$ if
%$p=q$) instead of $\mathcal{P}_{as(p;q)}(^{1}X;Y)$.

An $m$-homogeneous polynomial $P\in\mathcal{P}(^{m}X;Y)$ is said to
be $r$\textit{-dominated} if it is absolutely
$(\frac{r}{m};r)$-summing. In this case we write
$\mathcal{P}_{d,r}(^{m}X;Y)$ and $\Vert\cdot\Vert_{d,r}$ instead of
$\mathcal{P}_{as(\frac{r}{m};r)}(^{m}X;Y)$ and $\Vert\cdot\Vert
_{as(\frac{r}{m};r)}$. As usual  we write $\mathcal{P}_{d,r}(^{m}X)$
and $\mathcal{P}(^{m}X)$ when $Y=\mathbb{K}$. The definition (and
notation) for $r$-dominated multilinear mappings is analogous (for
the notation just replace $\cal P$ by $\cal L$). For details we refer to \cite{note, PAMS, Jar}.

\section{Results}\label{two}
First we establish the existence of Banach spaces on which every bilinear form (hence every scalar-valued 2-homogeneous polynomial) is 1-dominated. By $X \tilde{\otimes}_\pi X$ and $X \tilde{\otimes}_\varepsilon X$ we mean the completions of the tensor product $ X \otimes X$ with respect to the projective norm $\pi$ and the injective norm $\varepsilon$, respectively. For the basics on tensor norms we refer to \cite{DF, ryan}.\\
\indent By $\Pi_r$ we denote the ideal of absolutely $r$-summing linear operators. The following well-known factorization theorem (see, e.g., \cite[Theorem 14]{pietsch} or \cite[Proposition 46(a)]{note}) will be useful a couple of times.

\begin{lemma}\label{lemma} ${\cal L}_{d,r}(^mX;Y) = {\cal L} \circ (\Pi_r,\stackrel{(m)}{\ldots},\Pi_r)(^mX;Y)$ and ${\cal P}_{d,r}(^mX;Y) = {\cal P} \circ \Pi_r(^mX;Y)$ regardless of the positive integer $m$ and the Banach spaces $X$ and $Y$.
\end{lemma}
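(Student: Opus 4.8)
The plan is to establish each of the two identities as an equality of sets (indeed with coinciding norms) by proving the two inclusions separately. Recall that $A$ belongs to $\mathcal{L} \circ (\Pi_r,\stackrel{(m)}{\ldots},\Pi_r)(^mX;Y)$ precisely when there are Banach spaces $Z_1,\ldots,Z_m$, operators $u_k \in \Pi_r(X;Z_k)$ and a continuous $m$-linear $B \colon Z_1 \times \cdots \times Z_m \to Y$ with $A(x_1,\ldots,x_m) = B(u_1 x_1,\ldots,u_m x_m)$, and similarly $P \in \mathcal{P}\circ\Pi_r(^mX;Y)$ when $P = Q \circ u$ with $u \in \Pi_r(X;Z)$ and $Q \in \mathcal{P}(^mZ;Y)$. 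The inclusions $\supseteq$ are the routine directions: given weakly $r$-summable sequences $(x_j^k)_{j}$, the fact that each $u_k$ is absolutely $r$-summing yields $(u_k x_j^k)_{j} \in \ell_r(Z_k)$, and combining the pointwise estimate $\Vert A(x_j^1,\ldots,x_j^m)\Vert \leq \Vert B\Vert \prod_{k=1}^m \Vert u_k x_j^k\Vert$ with H\"older's inequality (the $m$ exponents all equal to $m$) shows that $(A(x_j^1,\ldots,x_j^m))_j \in \ell_{r/m}(Y)$ with the expected norm control, so $A$ is $r$-dominated. The polynomial case is the specialization to a single operator.

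The substance of the lemma lies in the reverse inclusions $\subseteq$, and here I would invoke the Pietsch Domination Theorem for $r$-dominated multilinear mappings. Since $A$ is $r$-dominated there are regular Borel probability measures $\mu_1,\ldots,\mu_m$ on the weak-$*$ compact ball $B_{X^*}$ and a constant $C$ with
\[
\Vert A(x_1,\ldots,x_m)\Vert \leq C \prod_{k=1}^m \left( \int_{B_{X^*}} |\varphi(x_k)|^r \, d\mu_k(\varphi) \right)^{1/r}
\]
for all $x_1,\ldots,x_m \in X$. I then take $u_k \colon X \to L_r(\mu_k)$ to be the composition of the canonical isometric embedding $X \hookrightarrow C(B_{X^*})$ with the formal inclusion $C(B_{X^*}) \hookrightarrow L_r(\mu_k)$, which is the prototypical absolutely $r$-summing operator, and I let $Z_k$ be the closure of $u_k(X)$ in $L_r(\mu_k)$. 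The displayed inequality is exactly $\Vert A(x_1,\ldots,x_m)\Vert \leq C \prod_{k=1}^m \Vert u_k x_k\Vert$, so I may define $B$ on $u_1(X) \times \cdots \times u_m(X)$ by $B(u_1 x_1,\ldots,u_m x_m) := A(x_1,\ldots,x_m)$; the same inequality, applied coordinatewise to differences, guarantees that $B$ is well defined and continuous with $\Vert B\Vert \leq C$, and $B$ extends to $Z_1 \times \cdots \times Z_m$ by density and multilinearity. This exhibits $A = B \circ (u_1,\ldots,u_m)$ with each $u_k \in \Pi_r$, giving the inclusion.

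The polynomial identity follows along identical lines, now using the polynomial Pietsch domination inequality $\Vert P(x)\Vert \leq C \left( \int_{B_{X^*}} |\varphi(x)|^r \, d\mu(\varphi) \right)^{m/r}$ with a single measure $\mu$: defining $u \colon X \to L_r(\mu)$ as above and setting $Q(ux) := P(x)$ on the closure $Z$ of $u(X)$, the inequality reads $\Vert Q(z)\Vert \leq C \Vert z\Vert^m$, so $Q$ is a bounded $m$-homogeneous polynomial and $P = Q \circ u$. I expect the main obstacle to be the reverse inclusion, and within it the Pietsch Domination Theorem for these nonlinear summing classes, whose proof rests on a Ky Fan / Hahn--Banach separation argument in the space of measures on $B_{X^*}$; granting that theorem, the only delicate point remaining is verifying that $B$ (respectively $Q$) is genuinely well defined on the image of the summing operators before extending it by density.
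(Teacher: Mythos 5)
The paper gives no proof of this lemma: it is quoted as a well-known factorization theorem, with references to Pietsch [Theorem 14] and to Botelho [Proposition 46(a)]. Your overall strategy is exactly the one behind those sources --- the inclusion $\supseteq$ by H\"older's inequality, the inclusion $\subseteq$ by Pietsch domination and factorization through $L_r(\mu)$ --- and your multilinear half is correct as written: the ``coordinatewise differences'' trick legitimately proves well-definedness of $B$, because changing one coordinate at a time turns a difference of values of $A$ into a single value of $A$ at a point whose distinguished coordinate is annihilated by the corresponding $u_k$, and multilinearity plus the domination inequality then kills that term.

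The polynomial half, however, does not ``follow along identical lines,'' and this is a genuine gap. Two distinct problems arise. First, well-definedness of $Q$: if $ux = uy$ you must show $P(x) = P(y)$, but $P(x)-P(y)$ is not a value of $P$, so the inequality $\Vert P(x)\Vert \leq C\left(\int |\varphi(x)|^r\, d\mu\right)^{m/r}$ cannot be ``applied to differences'' --- the multilinearity that made this work for $A$ is precisely what $P$ lacks. Second, and more fundamentally, a map $Q$ on $u(X)$ satisfying $Q(tz)=t^mQ(z)$ and $\Vert Q(z)\Vert \leq C\Vert z\Vert^m$ is not thereby an $m$-homogeneous polynomial, and without a multilinear representation you also cannot carry out the extension by density, since that requires uniform continuity on bounded sets, which the pointwise bound alone does not give. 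Both problems are repaired by one standard move that the paper's own definition hands you for free: $P$ is $r$-dominated if and only if the symmetric $m$-linear mapping $\check{P}$ associated to $P$ is $r$-dominated. So apply your (correct) multilinear factorization to $\check{P}$, noting that the $m$ measures may be replaced by their average $\mu=\frac{1}{m}\sum_k \mu_k$ at the cost of the constant $m^{m/r}$, which yields $\check{P} = B\circ(u,\ldots,u)$ with a single $u\in\Pi_r(X;Z)$; then $Q(z):=B(z,\ldots,z)$ is a genuine continuous $m$-homogeneous polynomial on $Z$ and $P=Q\circ u$. (Alternatively, well-definedness alone can be rescued from the polynomial inequality by polarization combined with a scaling argument, but the route through $\check{P}$ is the standard one and settles all three issues at once.)
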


\begin{proposition}\label{prop} Let $X$ be a cotype 2 space. Then $X \tilde{\otimes}_\pi X = X \tilde{\otimes}_\varepsilon X$ if and only if $\mathcal{L}_{d,1}(^{2}X)= \mathcal{L}(^{2}X)$.
\end{proposition}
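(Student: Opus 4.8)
The plan is to pass to duals and turn both sides of the equivalence into an identity between spaces of bilinear forms. Recall the standard identifications $(X\tilde{\otimes}_\pi X)^\ast=\mathcal{L}(^2X)$ and $(X\tilde{\otimes}_\varepsilon X)^\ast=\mathcal{I}(^2X)$, where $\mathcal{I}(^2X)$ denotes the space of integral bilinear forms on $X$ (see \cite{DF, ryan}). The canonical norm-decreasing operator $\iota\colon X\tilde{\otimes}_\pi X\to X\tilde{\otimes}_\varepsilon X$ has dense range and its adjoint is precisely the inclusion $\mathcal{I}(^2X)\hookrightarrow\mathcal{L}(^2X)$. A routine duality argument (if every bounded form is integral then $\iota^\ast$ is a surjective isomorphism, hence so is $\iota^{\ast\ast}$ and therefore $\iota$; the converse is immediate) shows that $X\tilde{\otimes}_\pi X=X\tilde{\otimes}_\varepsilon X$ if and only if $\mathcal{I}(^2X)=\mathcal{L}(^2X)$. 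So everything reduces to proving, under the cotype $2$ hypothesis, the identity $\mathcal{L}_{d,1}(^2X)=\mathcal{I}(^2X)$: once this is known, $\mathcal{L}_{d,1}(^2X)=\mathcal{L}(^2X)$ and $\mathcal{I}(^2X)=\mathcal{L}(^2X)$ hold or fail together.

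For the inclusion $\mathcal{L}_{d,1}(^2X)\subseteq\mathcal{I}(^2X)$, which needs no cotype assumption, I would start from Lemma \ref{lemma}: a $1$-dominated form factors as $A(x,y)=B(ux,vy)$ with $u,v$ absolutely $1$-summing and $B$ bounded bilinear. Factoring $u$ and $v$ through their Pietsch maps $j_i\colon X\to L_1(\mu_i)$, with $\mu_i$ a probability measure on $B_{X^\ast}$, the form $A$ becomes bounded on the product $j_1(X)\times j_2(X)$ for the $L_1$-norms; passing to the full $L_1$-spaces via the injectivity of the relevant $L_\infty$-space and using that bounded bilinear forms on $L_1$-spaces are given by bounded kernels, hence are integral, one concludes that $A$ is integral. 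This is the standard fact that dominated bilinear forms are integral.

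The reverse inclusion $\mathcal{I}(^2X)\subseteq\mathcal{L}_{d,1}(^2X)$ is where the hypothesis enters and is the heart of the matter. Given an integral form written as $A(x,y)=\int_{K}\varphi(x)\psi(y)\,d\mu(\varphi,\psi)$ over $K=B_{X^\ast}\times B_{X^\ast}$, and weakly $1$-summable sequences $(x_j)_j,(y_j)_j$, I would estimate $|A(x_j,y_j)|\le\|J_1x_j\|_{2}\,\|J_2y_j\|_{2}$ by the Cauchy--Schwarz inequality in $L_2(|\mu|)$, where $J_1,J_2$ are the canonical maps of $X$ into the $L_2$-spaces of the two marginals of $|\mu|$; these are $2$-summing by Pietsch's characterization. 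A second application of Cauchy--Schwarz gives $\sum_j|A(x_j,y_j)|^{1/2}\le\big(\sum_j\|J_1x_j\|_2\big)^{1/2}\big(\sum_j\|J_2y_j\|_2\big)^{1/2}$. Now comes the key point: since $X$ has cotype $2$, every $2$-summing operator from $X$ into a Hilbert space is already $1$-summing, so $J_1$ and $J_2$ send weakly $1$-summable sequences to absolutely summable ones and both sums on the right are finite. Hence $(A(x_j,y_j))_j\in\ell_{1/2}$, i.e. $A$ is $1$-dominated.

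The hard part is exactly this cotype-$2$ input, namely the implication that $2$-summing operators from $X$ into Hilbert space are $1$-summing, which is the only place the hypothesis is used; the remaining steps are formal duality and routine summability estimates. I would therefore isolate and cite (or prove) that summing-operator fact as the technical core, and double-check that the marginal measures genuinely reduce the joint integral representation to two independent $2$-summing maps, which the Cauchy--Schwarz splitting above confirms.
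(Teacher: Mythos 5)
Your duality reduction is correct, and so is the inclusion $\mathcal{I}(^2X)\subseteq\mathcal{L}_{d,1}(^2X)$ under the cotype $2$ hypothesis: estimating an integral representation against $|\mu|$ by Cauchy--Schwarz, noting that the resulting maps $J_1,J_2$ into $L_2(|\mu|)$ are $2$-summing, and then invoking $\Pi_2(X;\cdot)=\Pi_1(X;\cdot)$ for cotype $2$ spaces \cite[Corollary 11.16(a)]{DJT} is a valid argument. In particular your forward implication ($X\tilde{\otimes}_\pi X=X\tilde{\otimes}_\varepsilon X\Rightarrow\mathcal{L}_{d,1}(^2X)=\mathcal{L}(^2X)$) is sound, and it is genuinely different from the paper's, which instead extends each form to $C(B_{X'})\tilde{\otimes}_\varepsilon C(B_{X'})$, uses Grothendieck's theorem that forms on $C(K)$-spaces are $2$-dominated, restricts, and only then upgrades $2$-dominated to $1$-dominated via the same cotype $2$ coincidence of $\Pi_1$ and $\Pi_2$.

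The gap is in the converse inclusion $\mathcal{L}_{d,1}(^2X)\subseteq\mathcal{I}(^2X)$, which you assert is cotype-free and ``standard''; it is neither, and both mechanisms you give for it fail. First, bounded bilinear forms on $L_1(\mu_1)\times L_1(\mu_2)$ are indeed given by kernels $k\in L_\infty(\mu_1\times\mu_2)$, but they are \emph{not} all integral: take $k$ with an $n\times n$ Hadamard sign pattern; restricting to the span of normalized disjoint indicator functions gives an isometric copy of $\ell_1^n\times\ell_1^n$ on which the form has matrix $H$, and the integral norm of that form is at least $\langle H,H\rangle/\|H\|_{\ell_\infty^n\to\ell_1^n}\geq n^2/n^{3/2}=\sqrt{n}$ while its sup norm is $1$. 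Equivalently, $L_1\tilde{\otimes}_\pi L_1\neq L_1\tilde{\otimes}_\varepsilon L_1$. Second, the passage from $j_1(X)\times j_2(X)$ to the full $L_1$-spaces is not an application of injectivity: a bounded form on $M\times N$ is an operator $T\colon M\to N^*$, and $N^*$ is a \emph{quotient} of $L_\infty(\mu_2)$, not a subspace, so you face a lifting problem through $L_\infty(\mu_2)\twoheadrightarrow N^*$, which is unsolvable in general. If this extension step worked, every bounded form on any pair of subspaces of $L_1$-spaces would be integral; but $\ell_2$ embeds isomorphically into $L_1$ (span of Rademachers), and the inner-product form on $\ell_2$ is not integral, since on $\ell_2^n$ its integral norm equals the nuclear norm of $id_{\ell_2^n}$, which is $n$. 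Finally, there is a decisive structural reason your claim cannot be both cotype-free and standard: combined with your (correct) duality step, it would show that $\mathcal{L}_{d,1}(^2X)=\mathcal{L}(^2X)$ implies $X\tilde{\otimes}_\pi X=X\tilde{\otimes}_\varepsilon X$ for \emph{every} Banach space $X$ --- precisely the Conjecture the authors pose as open at the end of this paper. This is why the paper's converse is more roundabout and keeps cotype $2$ in force: $1$-dominated $\Rightarrow$ $2$-dominated $\Rightarrow$ extendible by \cite[Theorem 23]{MT}, then integrality via an adaptation of \cite[Proposition 1.1]{carando}, and only then \cite[Ex. 4.12]{DF}. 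Your proof needs the cotype hypothesis to do real work in the step from $1$-dominated to integral, and as written it never enters there.
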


\begin{proof} Arguably this result is contained, in essence, in \cite{Jar}. We give the details for the sake of completeness.\\
\indent Assume that $X \tilde{\otimes}_\pi X = X \tilde{\otimes}_\varepsilon X$ and let $A \in \mathcal{L}(^{2}X)$. Denoting the linearization of $A$ by $A_L$ we have that $A_L \in (X \tilde{\otimes}_\pi X)' = (X \tilde{\otimes}_\varepsilon X)'$. Regarding $X$ as a subspace of $C(B_{X'})$ and using that $\varepsilon$ respects the formation of subspaces, $A_L$ admits a continuous extension to $C(B_{X'})\tilde{\otimes}_\varepsilon C(B_{X'}) $, hence to $C(B_{X'})\tilde{\otimes}_\pi C(B_{X'}) $ because $\varepsilon \leq \pi$. Using that bilinear forms on $C(K)$-spaces are 2-dominated, we have that the bilinear form associated to this extension is 2-dominated. But restrictions of 2-dominated bilinear forms are 2-dominated as well, so $A$ is 2-dominated. %Using that ${\cal L}_{d,2}(^2X) = {\cal L} \circ (\Pi_2,\Pi_2)(^2X)$ (see, e.g., \cite[Proposition 46(a)]{note}), where $\Pi_2$ is the ideal of $2$-summing linear operators, there are Banach spaces $F$ and $G$, 2-summing linear operators $u \colon X \longrightarrow F$, $v \colon X \longrightarrow G$ and a continuous bilinear form $B$ on $ F \times G$ such that $A = B\circ (u,v)$.
Since 2-summing operators on cotype 2 spaces are 1-summing \cite[Corollary 11.16(a)]{DJT}, we have that $\Pi_1(X;Y) = \Pi_2(X;Y)$ for every $Y$, so by Lemma \ref{lemma} we have
$${\cal L}_{d,2}(^2X) = {\cal L} \circ (\Pi_2,\Pi_2)(^2X) = {\cal L} \circ (\Pi_1,\Pi_1)(^2X) = {\cal L}_{d,1}(^2X).$$
It follows that $A$ is 1-dominated.\\
\indent Conversely, assume that $\mathcal{L}_{d,1}(^{2}X)= \mathcal{L}(^{2}X)$ and let $A \in  \mathcal{L}(^{2}X)$. Since 1-dominated bilinear forms are 2-dominated, we have that $A$ is 2-dominated, hence extendible by \cite[Theorem 23]{MT}. Adapting the proof of \cite[Proposition 1.1]{carando} to bilinear forms we conclude that $A$ is integral. Now apply \cite[Ex. 4.12]{DF} to get $X \tilde{\otimes}_\pi X = X \tilde{\otimes}_\varepsilon X$.
\end{proof}

\begin{example}\label{ex}\rm Pisier \cite{acta} proved that every cotype 2 space $E$ embeds isometrically in a cotype 2 space $X$ such that $X \tilde{\otimes}_\pi X = X \tilde{\otimes}_\varepsilon X$. So for every such space $X$ we have that $\mathcal{L}_{d,1}(^{2}X)= \mathcal{L}(^{2}X)$.
\end{example}

It is easy to see that $\cot X = 2$ is a necessary condition for every bilinear form on $X$ to be 1-dominated:

\begin{proposition}\label{propos} If $\mathcal{L}_{d,1}(^{2}X)= \mathcal{L}(^{2}X)$, then $\cot X = 2$.
\end{proposition}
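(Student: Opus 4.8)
The plan is to reduce the statement to a single uniform inequality and then violate it on the copies of $\ell_s^n$ that the failure of cotype $2$ produces. Since $\|\cdot\|$ and $\|\cdot\|_{d,1}$ are complete norms on $\mathcal{L}(^2X)$ and $\mathcal{L}_{d,1}(^2X)$ respectively, and $\|A\|\le\|A\|_{d,1}$, the hypothesis $\mathcal{L}_{d,1}(^2X)=\mathcal{L}(^2X)$ together with the open mapping theorem yields a constant $C>0$ with $\|A\|_{d,1}\le C\|A\|$ for every $A\in\mathcal{L}(^2X)$. Unwinding the definition of the $(\tfrac12;1)$-summing norm, this reads
\[
\Big(\sum_{j=1}^n |A(x_j,y_j)|^{1/2}\Big)^2 \le C\,\|A\|\,\|(x_j)_{j=1}^n\|_{w,1}\,\|(y_j)_{j=1}^n\|_{w,1}
\]
for every $A\in\mathcal{L}(^2X)$ and all finite families in $X$. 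Because $\cot X\ge 2$ always holds, it suffices to rule out $\cot X>2$.

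Assume then that $\cot X=s>2$. By the Maurey--Pisier theorem $\ell_s$ is finitely representable in $X$, so for every $n$ there is an embedding $T_n\colon\ell_s^n\to X$ with $\|T_n\|\le 1$ and $\|T_n^{-1}\|\le\lambda$ on its range, where $\lambda$ is independent of $n$. Put $f_j=T_n(e_j)$. Since $T_n^*$ maps $B_{X^*}$ into $B_{(\ell_s^n)^*}=B_{\ell_{s'}^n}$, one computes $\|(f_j)_{j=1}^n\|_{w,1}\le\sup_{\|\psi\|_{s'}\le 1}\sum_{j=1}^n|\psi_j|=n^{1/s}$. The model form to transport is the diagonal bilinear form $D(a,b)=\sum_{j=1}^n a_jb_j$ on $\ell_s^n$, for which $D(e_j,e_j)=1$ and $\|D\|=n^{1-2/s}$, the supremum being attained at the normalized constant vector.

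The decisive step is to realize $D\circ(T_n^{-1},T_n^{-1})$, which lives on the subspace $E_n=T_n(\ell_s^n)$, as a bilinear form $A_n$ on the whole of $X$ with $A_n(f_j,f_j)=1$ and $\|A_n\|\le c\,n^{1-2/s}$ for some $c$ independent of $n$. Granting this and feeding $x_j=y_j=f_j$ into the displayed inequality gives
\[
n^2=\Big(\sum_{j=1}^n 1\Big)^2 \le C\,\|A_n\|\,\big(n^{1/s}\big)^2 \le C\,c\,n^{1-2/s}\,n^{2/s}=C\,c\,n,
\]
whence $n\le Cc$ for all $n$, a contradiction. Therefore $\cot X=2$.

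The main obstacle is precisely this controlled global realization: a plain Hahn--Banach extension of the $n$ coordinate functionals destroys the joint (weak-$\ell_2$) estimate that keeps $\|A_n\|$ of the order $n^{1-2/s}$. I would resolve it by writing $A_n(x,y)=\langle \tilde u_n x,\tilde u_n y\rangle$, where $\tilde u_n\colon X\to\ell_2^n$ extends the operator $u_n=(g_j)_j\colon E_n\to\ell_2^n$ with $g_j=e_j^*\circ T_n^{-1}$ (so $u_n f_j=e_j$ and, on $E_n$, $\|u_n\|\le\lambda\,n^{1/2-1/s}$), and controlling $\|\tilde u_n\|$ by the same injective-tensor/extension device used in the proof of Proposition \ref{prop}, where $X\subseteq C(B_{X'})$ and $\varepsilon$ respects subspaces, so that the local bound survives the extension up to a constant.
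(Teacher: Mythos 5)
Your overall strategy --- open mapping theorem to get a uniform domination constant, Maurey--Pisier to produce uniform copies of $\ell_s^n$ when $\cot X = s > 2$, and a transported diagonal form to violate the uniform inequality --- is sound, and it is a genuinely different route from the paper's proof (which is a chain of three citations: all bilinear forms $1$-dominated implies every operator $X\to X'$ is $1$-summing, hence $id_X$ is $(2;1)$-summing by Lindenstrauss--Pe\l czy\'nski, hence $\cot X=2$ by Diestel--Jarchow--Tonge). But the step you yourself flag as decisive is wrong as proposed. You want $A_n$ on all of $X$ with $A_n(f_j,f_j)=1$ and $\|A_n\|\le c\,n^{1-2/s}$, obtained by extending $u_n\colon E_n\to\ell_2^n$ to $\tilde u_n\colon X\to\ell_2^n$ with norm preserved up to a constant. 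No such extension exists in general: uniformly bounded extension of Hilbert-space-valued operators from arbitrary subspaces is essentially Maurey's extension property and requires type $2$, which $X$ need not have; and the Maurey--Pisier copies $E_n$ need not be uniformly complemented, so you cannot project either. Nor can the injective-tensor device of Proposition \ref{prop} rescue this: what Hahn--Banach preserves across $E_n\tilde{\otimes}_\varepsilon E_n\subseteq X\tilde{\otimes}_\varepsilon X$ is the \emph{integral} norm, not the sup norm, and the integral norm of your diagonal form is of order $n$ (by trace duality against the formal identity $\ell_{s^*}^n\to\ell_s^n$), not $n^{1-2/s}$. So the bound $\|A_n\|\le c\,n^{1-2/s}$ has no justification.

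The good news is that the repair is exactly the step you dismissed. Extend each coordinate functional $g_j=e_j^*\circ T_n^{-1}$ by Hahn--Banach to $\tilde g_j\in X^*$ with $\|\tilde g_j\|\le\lambda$, and set $A_n(x,y)=\sum_{j=1}^n\tilde g_j(x)\tilde g_j(y)$. Then $A_n(f_i,f_i)=\sum_{j}\tilde g_j(f_i)^2=1$ (since $\tilde g_j(f_i)=g_j(f_i)=\delta_{ij}$ exactly, the $f_i$ lying in $E_n$), and $\|A_n\|\le\sum_j\|\tilde g_j\|^2\le\lambda^2 n$. This is far worse than $n^{1-2/s}$, but your inequality never needed that much: it only needs $\|A_n\|$ to be $o(n^{2-2/s})$. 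Indeed, feeding $x_j=y_j=f_j$ gives
\[
n^2\;\le\; C\,\|A_n\|\,n^{2/s}\;\le\; C\lambda^2\,n^{1+2/s},
\]
that is $n^{1-2/s}\le C\lambda^2$, a contradiction for large $n$ precisely because $s>2$. Two smaller points: $\|\cdot\|_{d,1}$ is a complete $\tfrac12$-norm rather than a norm, so you should invoke the open mapping theorem for $F$-spaces (as the paper itself does in Theorem \ref{con}); and the case where $X$ has no finite cotype should be mentioned --- Maurey--Pisier then gives uniform copies of $\ell_\infty^n$, and the same computation with $1/s=0$ is even easier. With these repairs your argument is correct and self-contained modulo Maurey--Pisier, in contrast with the paper's purely citation-based proof.
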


\begin{proof} By \cite[Lemma 3.4]{B} we have that every bounded linear operator from $X$ to $X'$ is 1-summing. So, from \cite[Proposition 8.1(2)]{LP} we conclude that the identity operator on $X$ is (2;1)-summing. It follows that $\cot X = 2$ by \cite[Theorem 14.5]{DJT}.
\end{proof}

Let $X$ be such that $\cot X > 2$. Since we cannot have $\mathcal{L}_{d,1}(^{2}X)= \mathcal{L}(^{2}X)$, for which $r > 1$ is it possible to have $\mathcal{L}_{d,r}(^{2}X)= \mathcal{L}(^{2}X)$? Or, at least, $\mathcal{P}_{d,r}(^{2}X)= \mathcal{P}(^{2}X)$? In other words we seek estimates for the numbers
$${\cal L}_X := \inf\{r : \mathcal{L}_{d,r}(^{2}X)= \mathcal{L}(^{2}X)\}~{\rm and~}{\cal P}_X := \inf\{r : \mathcal{P}_{d,r}(^{2}X)= \mathcal{P}(^{2}X)  \}. $$
It is not difficult to give a lower bound for ${\cal L}_X$. By $q^*$ we mean the conjugate of the number $q > 1$.

\begin{proposition}\label{proposition} If $\cot X > 2$, then ${\cal L}_X \geq (\cot X)^*$.
\end{proposition}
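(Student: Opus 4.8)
The plan is to establish the contrapositive quantitative statement: if $\mathcal{L}_{d,r}(^2X)=\mathcal{L}(^2X)$ then $r\ge(\cot X)^*$; equivalently, for each $r$ with $1\le r<(\cot X)^*$ I will exhibit a continuous bilinear form on $X^2$ that is not $r$-dominated. Writing $c:=\cot X$, the first step follows the pattern of Proposition \ref{propos} and reduces everything to linear operators. Each $A\in\mathcal{L}(^2X)$ has an associated operator $T_A\in\mathcal{L}(X;X')$ given by $T_A(x)(y)=A(x,y)$, and conversely; using Lemma \ref{lemma} and the factorization of dominated forms exactly as in the proof of Proposition \ref{propos} (the $r$-analogue of \cite[Lemma 3.4]{B}), one shows that $\mathcal{L}_{d,r}(^2X)=\mathcal{L}(^2X)$ is equivalent to $\Pi_r(X;X')=\mathcal{L}(X;X')$. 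Since these are Banach (quasi-Banach) spaces with the continuous inclusion $\|T\|\le\pi_r(T)$, the open mapping theorem then yields a constant $C$ with $\pi_r(T)\le C\|T\|$ for the $r$-summing norm $\pi_r$ of every $T\in\mathcal{L}(X;X')$.

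The second step extracts a cotype obstruction from this uniform estimate, and it is here that the conjugate exponent $(\cot X)^*$ enters. Because $c>2$, Maurey--Pisier gives that $\ell_c$ is finitely representable in $X$; fixing $(1+\varepsilon)$-copies $E_n\subseteq X$ of $\ell_c^n$, with unit-vector basis $(x_i^{(n)})_{i=1}^n$ normalized so that $\|\sum_i a_i x_i^{(n)}\|\le\|a\|_c$, one checks directly that $\|(x_i^{(n)})_i\|_{w,r}\le\|id\colon\ell_{c^*}^n\to\ell_r^n\|$ in $X$. Thus the unit-vector sequences are weakly $r$-summable with norm bounded independently of $n$ exactly when $r\ge c^*=(\cot X)^*$, and their weak summability degrades as $r$ drops below this threshold. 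Using the biorthogonal functionals of $E_n$, extended to $X'$ by Hahn--Banach, I would build operators $T_n\in\mathcal{L}(X;X')$ and estimate $\pi_r(T_n)$ from below on the test vectors $(x_i^{(n)})_i$; the aim is to show that for $r<(\cot X)^*$ the quantity $\pi_r(T_n)$ outgrows $\|T_n\|$ as $n\to\infty$, contradicting the uniform bound $\pi_r(T)\le C\|T\|$. Unwinding the correspondence of the first step, the associated forms $A_n$ are then bounded but not uniformly $r$-dominated, so $\mathcal{L}_{d,r}(^2X)\ne\mathcal{L}(^2X)$.

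The hard part will be the passage from the finite-dimensional $\ell_c^n$-picture, where the failure of $r$-summability is transparent, to genuine operators $T_n$ defined on all of $X$ whose operator norms are controlled: this is the linear counterpart of the extendibility phenomenon that makes bilinear forms subtler than their restrictions, and it is precisely what prevents one from simply reading the estimate off the subspaces $E_n$. In particular, isolating the \emph{sharp} constant $(\cot X)^*$, rather than a weaker exponent such as $2c/(c+2)$ that a crude chain ``$r$-summing operators into $X'$ $\Rightarrow$ $(s;r)$-summing identity $\Rightarrow$ cotype $s$'' would produce, requires matching the weak-$r$-summability threshold of the unit-vector sequences to the cotype of $X$ optimally. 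This is where an LP-type result \cite[Proposition 8.1(2)]{LP}, together with the cotype characterization \cite[Theorem 14.5]{DJT}, should be invoked with the same care as in the $r=1$ instance treated in Proposition \ref{propos}, and I expect it to be the main technical obstacle.
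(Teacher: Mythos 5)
Your reduction to linear operators is fine as far as it goes: if every bilinear form on $X^2$ is $r$-dominated, then by Lemma \ref{lemma} each $A$ factors as $A(x,y)=B(u(x),v(y))$ with $u,v$ absolutely $r$-summing, so $T_A=v'\circ B_L\circ u$ is $r$-summing ($B_L$ denoting the linearization of $B$ in its second variable), giving $\Pi_r(X;X')=\mathcal{L}(X;X')$ and, via the open mapping theorem, a uniform bound $\pi_r(T)\le C\Vert T\Vert$. (You only need this direction; the ``equivalence'' you assert is stronger than what the $r$-analogue of \cite[Lemma 3.4]{B} provides, and its converse is not clear.) The genuine gap is the step you yourself label ``the main technical obstacle'': the operators $T_n$ are never constructed, and the construction you sketch cannot work as described. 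Take $T_nx=\sum_{i=1}^n\xi_i(x)\xi_i$ with $\xi_i\in X'$ Hahn--Banach extensions of the biorthogonal functionals of $E_n\cong\ell_c^n$. Biorthogonality does give $\bigl(\sum_i\Vert T_nx_i^{(n)}\Vert^r\bigr)^{1/r}\ge n^{1/r}$, and with your weak-norm computation this yields $\pi_r(T_n)\gtrsim n^{1/c^*}$; but nothing controls $\Vert T_n\Vert$: for $x\in B_X$ the scalars $(\xi_i(x))_i$ lie only in $B_{\ell_\infty^n}$ (the extended $\xi_i$ are not coordinate functionals on $X$), and $\Vert\sum_ia_i\xi_i\Vert_{X'}$ admits no estimate better than $\sum_i|a_i|$, so $\Vert T_n\Vert$ may be of order $n$ and the ratio $\pi_r(T_n)/\Vert T_n\Vert$ need not grow at all. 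This is not a removable technicality: your recipe uses only the local presence of $\ell_c^n$'s plus Hahn--Banach, and Pisier's spaces of Example \ref{ex} (cotype $2$, all bilinear forms $1$-dominated, yet containing $\ell_2^n$'s uniformly) show that such purely local input can never by itself defeat domination; some global use of $\cot X>2$ is indispensable, and your sketch contains none.

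Compare with the paper's proof, which avoids any construction whatsoever. For $1\le r<(\cot X)^*$, \cite[Corollary 11.16(b)]{DJT} gives $\Pi_r(X;Y)=\Pi_1(X;Y)$ for every $Y$, so by Lemma \ref{lemma}
$$\mathcal{L}_{d,r}(^2X)=\mathcal{L}\circ(\Pi_r,\Pi_r)(^2X)=\mathcal{L}\circ(\Pi_1,\Pi_1)(^2X)=\mathcal{L}_{d,1}(^2X),$$
and $\mathcal{L}_{d,1}(^2X)\neq\mathcal{L}(^2X)$ is exactly Proposition \ref{propos}, where \cite[Proposition 8.1(2)]{LP} and \cite[Theorem 14.5]{DJT} are invoked once and for all at $r=1$. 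The sharp exponent $(\cot X)^*$ is therefore not extracted from an optimized finite-dimensional estimate; it is the threshold in the coincidence theorem for summing operators, which collapses the entire range $1\le r<(\cot X)^*$ onto the already-settled case $r=1$. Your first step, followed by this coincidence argument instead of your second step, would give a complete proof --- essentially the paper's.
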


\begin{proof} By Proposition \ref{propos} we know that $\mathcal{L}_{d,1}(^{2}X)\neq \mathcal{L}(^{2}X)$. Using that $\Pi_1(X;Y) = \Pi_r(X;Y)$ whenever $1 \leq r < (\cot X)^*$ \cite[Corollary 11.16(b)]{DJT} and Lemma \ref{lemma}, we have that
$$\mathcal{L}_{d,r}(^{2}X) = {\cal L} \circ (\Pi_r,\Pi_r)(^2X) = {\cal L} \circ (\Pi_1,\Pi_1)(^2X) = \mathcal{L}_{d,1}(^{2}X) \neq \mathcal{L}(^{2}X)$$
for every $1 \leq r < (\cot X)^*$, so the result follows.
\end{proof}

It is in principle not clear that the same holds for polynomials. For polynomials the situation is usually more delicate. For instance, in \cite{proc} one can find a non-$r$-dominated bilinear form whose associated 2-homogeneous polynomial happens to be $r$-dominated. Though not clear in principle, we shall prove that ${\cal P}_X \geq (\cot X)^*$.

\indent The following proof extends an argument which
was first used in this context in \cite{studia}.

\begin{theorem}
\label{con}Let $m$ be an even positive integer and $X$ be an infinite
dimensional real Banach space. If $q<1$ and $\mathcal{P}_{as(q;r)}%
(^{m}X)=\mathcal{P}(^{m}X),$ then $id_{X}$ is $(\frac{mq}{1-q},r)$-summing.
\end{theorem}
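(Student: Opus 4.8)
The plan is to test the hypothesis against a carefully chosen family of diagonal polynomials and to extract the summability of $(\|x_j\|)_j$ from the summability of the polynomial values. I first write $s=\frac{mq}{1-q}$, so that the defining relation $(s+m)q=s$ (equivalently $mq=s(1-q)$) holds; this bookkeeping is exactly what will balance the exponents at the end. Given a finite weakly $r$-summable sequence $x_1,\dots,x_n\in X$, I would fix norming functionals $\varphi_j\in B_{X^{*}}$ with $\varphi_j(x_j)=\|x_j\|$ and form the $m$-homogeneous polynomial
$$P(x)=\sum_{j=1}^{n}\|x_j\|^{s}\,\varphi_j(x)^{m}.$$

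Before exploiting $P$, I would upgrade the set-theoretic hypothesis $\mathcal{P}_{as(q;r)}(^{m}X)=\mathcal{P}(^{m}X)$ to a quantitative one. Both spaces are complete (they are $q$-Banach since $q<1$), and testing on a single vector gives $\|Q\|\le\|Q\|_{as(q;r)}$ for every $Q$, so the inclusion map has closed graph; the closed graph theorem for $F$-spaces then furnishes a constant $C$ with $\|Q\|_{as(q;r)}\le C\|Q\|$ for all $Q\in\mathcal{P}(^{m}X)$. Applying this to $P$ and to the sequence $(x_j)$ yields $\bigl(\sum_j|P(x_j)|^{q}\bigr)^{1/q}\le C\|P\|\,\|(x_j)\|_{w,r}^{\,m}$.

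The crux, and the place where the hypotheses that $m$ is even and $X$ is real are consumed, is a two-sided estimate of $P$. Since $m$ is even and the scalars are real, every summand $\varphi_j(x)^{m}$ is nonnegative, so on one hand dropping all but the $k$-th term gives the lower bound $P(x_k)\ge\|x_k\|^{s}\varphi_k(x_k)^{m}=\|x_k\|^{s+m}$, whence $|P(x_k)|^{q}\ge\|x_k\|^{(s+m)q}=\|x_k\|^{s}$; on the other hand the crude upper bound $\|P\|\le\sum_j\|x_j\|^{s}$ holds because $|\varphi_j(x)|\le\|x\|\le1$ on $B_X$ and all terms are nonnegative. Writing $S=\sum_k\|x_k\|^{s}$ and combining these with the summing inequality produces the self-referential estimate $S\le C^{q}S^{q}\|(x_j)\|_{w,r}^{\,mq}$. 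Because $q<1$ this can be solved for $S$: one gets $S^{1-q}\le C^{q}\|(x_j)\|_{w,r}^{\,mq}$, and raising to the power $1/s$ while using $\frac{mq}{(1-q)s}=1$ gives $\bigl(\sum_k\|x_k\|^{s}\bigr)^{1/s}\le C^{q/((1-q)s)}\,\|(x_j)\|_{w,r}$. Taking the supremum over $n$ shows precisely that $id_X$ is $(\frac{mq}{1-q};r)$-summing.

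I expect the main obstacle to be recognizing the diagonal polynomial as the correct test object and seeing that evenness converts a one-term lower bound into a genuine estimate: for complex scalars, or for odd $m$, the nonnegativity of the summands fails and the lower bound on $P(x_k)$ collapses, so the whole scheme breaks down. A secondary, more technical point is the passage from the mere equality of spaces to the uniform constant $C$, which must be justified through the closed graph theorem in the $q$-Banach setting rather than assumed.
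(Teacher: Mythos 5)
Your proof is correct, and it shares the paper's core strategy: test the uniform $(q;r)$-summing constant (obtained by a Baire-category argument -- you use the closed graph theorem, the paper the open mapping theorem, which is an immaterial difference) against a diagonal $m$-homogeneous polynomial built from norming functionals, and use evenness of $m$ together with real scalars to get nonnegativity of the summands and hence a one-term lower bound. Where you genuinely diverge is in how the norm inequality is extracted. The paper takes coefficients $|\mu_j|^{1/q}$ with $(\mu_j)$ running over the unit sphere of $\ell_{1/q}$, so that $\|P\|\leq 1$, obtains $\sum_j \mu_j\|x_j\|^{mq}\leq K^q\|(x_j)\|_{w,r}^{mq}$ for every such choice, and then recovers the $\ell_{\frac{mq}{1-q}}$-norm of $(\|x_j\|)_j$ by duality, i.e.\ by taking the supremum over the $\ell_{1/q}$ sphere. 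You instead hard-wire the target exponent into the coefficients, taking $P(x)=\sum_j\|x_j\|^{s}\varphi_j(x)^m$ with $s=\frac{mq}{1-q}$, so that the quantity $S=\sum_j\|x_j\|^s$ appears on both sides of the summing inequality with different powers ($S\leq C^qS^q\|(x_j)\|_{w,r}^{mq}$), and you solve this self-referential estimate using $q<1$. Your finish avoids the duality step entirely and is arguably more direct; the paper's weighted version is the classical converse-H\"older device and generalizes more readily when the exponent bookkeeping does not close up so neatly. Both arguments consume the hypotheses in exactly the same places, and your exponent identities ($(s+m)q=s$, $(1-q)s=mq$) check out.
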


\begin{proof}
The open mapping theorem gives us a constant $K>0$ so that $\left\Vert
Q\right\Vert _{as(q;r)}\leq K\Vert Q\Vert$ for all continuous $m$-homogeneous
polynomials $Q\colon X\longrightarrow Y.$\newline\indent Let $n\in\mathbb{N}$
and $x_{1},\ldots,x_{n}\in X$ be given. Consider $x_{1}^{\ast},\ldots
,x_{n}^{\ast}\in B_{X^{\ast}}$ so that $x_{j}^{\ast}(x_{j})=\left\Vert
x_{j}\right\Vert $ for every $j=1,\ldots,n$. Let $\mu
_{1},\ldots,\mu_{n}$ be real numbers such that $\sum\limits_{j=1}^{n}|\mu
_{j}|^{s}=1,$ where $s=\frac{1}{q}.$ Define $P\colon X\longrightarrow
\mathbb{R}$ by
\[
P(x)=\sum\limits_{j=1}^{n}\left\vert \mu_{j}\right\vert ^{\frac{1}{q}}%
x_{j}^{\ast}(x)^{m}\mathrm{~for~every~}x\in X.
\]
Since $m$ is even and $\mathbb{K} = \mathbb{R}$, it follows that
$P(x)\geq0$ for every $x\in X$. Also, $ \left\vert P(x)\right\vert
=P(x)\geq\left\vert \mu_{k}\right\vert ^{\frac
{1}{q}}x_{k}^{\ast}(x)^{m} $ for every $x\in X$ and every
$k=1,\ldots,n.$ From
%Note that%
\[
\left\vert P(x)\right\vert =\left\vert \sum\limits_{j=1}^{n}\left\vert \mu
_{j}\right\vert ^{\frac{1}{q}}x_{j}^{\ast}(x)^{m}\right\vert \leq\Vert
x\Vert^{m}\sum\limits_{j=1}^{n}|\mu_{j}|^{\frac{1}{q}}=\Vert x\Vert^{m}%
\]
we conclude that $\left\Vert P\right\Vert _{as(q;r)}\leq K\Vert P\Vert\leq K$.
So
%and obtain:%
\begin{align*}
\left(  \sum\limits_{j=1}^{n}\Vert x_{j}\Vert^{mq}\left\vert \mu
_{j}\right\vert \right)  ^{\frac{1}{q}}  &  =\left(  \sum\limits_{j=1}%
^{n}\left(  \Vert x_{j}\Vert^{m}\left\vert \mu_{j}\right\vert ^{\frac{1}{q}%
}\right)  ^{q}\right)  ^{\frac{1}{q}}\leq\left(  \sum\limits_{j=1}%
^{n}\left\vert P(x_{j})\right\vert ^{q}\right)  ^{\frac{1}{q}}\\
&  \leq\left\Vert P\right\Vert _{as(q;r)}(\Vert(x_{j})_{j=1}^{n}\Vert
_{w,r})^{m}.
\end{align*}
Observing that this last inequality holds whenever $\sum\limits_{j=1}^{n}%
|\mu_{j}|^{s}=1$ and that $\frac{1}{s}+\frac{1}{\frac{s}{s-1}}=1$ we have
\begin{align*}
\left(  \sum\limits_{j=1}^{n}\Vert x_{j}\Vert^{\frac{s}{s-1}mq}\right)
^{\frac{1}{\frac{s}{s-1}}}  &  =\sup\left\{  \left\vert \sum\limits_{j=1}%
^{n}\mu_{j}\Vert x_{j}\Vert^{mq}\right\vert ;\sum\limits_{j=1}^{n}|\mu
_{j}|^{s}=1\right\} \\
&  \leq\sup\left\{  \sum\limits_{j=1}^{n}\left\vert \mu_{j}\right\vert \Vert
x_{j}\Vert^{mq};\sum\limits_{j=1}^{n}|\mu_{j}|^{s}=1\right\} \\
&  \leq\left\Vert P\right\Vert _{as(q;r)}^{q}(\Vert(x_{j})_{j=1}^{n}%
\Vert_{w,r})^{mq}\\
&  \leq K^{q}(\Vert(x_{j})_{j=1}^{n}\Vert_{w,r})^{mq}.
\end{align*}
It follows that%
\[
\left(  \sum\limits_{j=1}^{n}\Vert x_{j}\Vert^{\frac{s}{s-1}mq}\right)
^{\frac{1}{(\frac{s}{s-1})mq}}\leq K^{\frac{1}{m}}\cdot\Vert(x_{j})_{j=1}%
^{n}\Vert_{w,r}.
\]
Since $\frac{s}{s-1}mq=\frac{mq}{1-q}$, $n$ and $x_{1},\ldots,x_{n}\in X$ are
arbitrary, we conclude that $id_{X}$ is $(\frac{mq}{1-q},r)$-summing.
\end{proof}

The following theorem holds for spaces over $\mathbb{K} = \mathbb{R}$ or $\mathbb{C}$:

\begin{theorem}
\label{coro11}If $\cot X=q>2$, then $\mathcal{P}_{d,r}(^{2}X)\neq
\mathcal{P}(^{2}X)$ for $1\leq r<q^{\ast}$, where $q^{\ast}$ is the
conjugate of $q$. In other words, ${\cal P}_X \geq q^*$.
\end{theorem}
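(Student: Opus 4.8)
The plan is to reduce the polynomial statement to a summability property of the identity operator and then contradict the cotype hypothesis. Suppose, for contradiction, that $\mathcal{P}_{d,r}(^2X)=\mathcal{P}(^2X)$ for some $1\leq r<q^*$. By definition $r$-domination means absolute $(\frac{r}{2};r)$-summability, so this says $\mathcal{P}_{as(r/2;r)}(^2X)=\mathcal{P}(^2X)$. The key observation is that Theorem \ref{con} applies with $m=2$ and $q=\frac{r}{2}$, provided we are in the real case and $\frac{r}{2}<1$, i.e. $r<2$. Since $r<q^*\leq 2$ (as $q>2$ forces $q^*<2$), the hypothesis $r<2$ is automatic, so $q=\frac{r}{2}<1$ holds. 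Applying Theorem \ref{con} yields that $id_X$ is $\bigl(\frac{2\cdot (r/2)}{1-(r/2)},r\bigr)$-summing, and a short computation gives $\frac{mq}{1-q}=\frac{r}{1-r/2}=\frac{2r}{2-r}$, so $id_X$ is $\bigl(\frac{2r}{2-r},r\bigr)$-summing.

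First I would establish the statement in the real case using this chain. The crux is then to contradict the cotype. Since $\cot X=q$, the identity $id_X$ is not $(s;r)$-summing for exponents that are too small relative to the cotype; concretely, one expects that $(s;r)$-summability of $id_X$ with $r<q^*$ forces a bound incompatible with $\cot X=q$ via \cite[Theorem 14.5]{DJT} or the Maurey-type estimates relating cotype to summing properties of the identity. The point is to check that $s=\frac{2r}{2-r}$ is small enough, together with $r<q^*$, to rule out $(s;r)$-summability on a space of cotype exactly $q$. The main obstacle is exactly this numerical verification: I must confirm that the pair $\bigl(\frac{2r}{2-r},r\bigr)$ lies outside the range of summing exponents permitted for the identity on a cotype-$q$ space, and that this holds precisely on the interval $1\leq r<q^*$ and not beyond, which is what makes the bound $q^*$ sharp.

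To pass from the real case to the complex case, I would use a standard complexification or restriction-of-scalars argument: a complex Banach space $X$ of cotype $q$, viewed as a real Banach space $X_{\mathbb{R}}$, still has cotype $q$, and a non-$r$-dominated real $2$-homogeneous polynomial on $X_{\mathbb{R}}$ can be transferred to a non-$r$-dominated complex polynomial on $X$ (or one works directly with the real underlying structure, noting that $r$-domination is insensitive to the scalar field in the relevant direction). Thus the existence of a non-$r$-dominated polynomial, equivalently $\mathcal{P}_{d,r}(^2X)\neq\mathcal{P}(^2X)$, follows in both cases, yielding $\mathcal{P}_X\geq q^*$.

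I expect the evenness of $m$ in Theorem \ref{con} (used to guarantee $P(x)\geq 0$ so that the polynomial norm is controlled) to be the reason the real hypothesis appears there, and hence the reduction to the real case is the natural bridge; the delicate step is ensuring the transition does not lose the cotype value $q$ and that the summing-exponent computation genuinely contradicts \cite[Theorem 14.5]{DJT} across the whole interval $1\leq r<q^*$.
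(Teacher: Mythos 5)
Your real-case plan applies Theorem \ref{con} directly with $m=2$ and first index $r/2$, obtaining that $id_X$ is $(\frac{2r}{2-r};r)$-summing, and then hopes to contradict $\cot X=q$ via \cite[Theorem 14.5]{DJT}. This is exactly where the argument breaks, and you yourself flag it as the unresolved ``main obstacle'': \cite[Theorem 14.5]{DJT} characterizes cotype through $(s;1)$-summability of the identity, i.e.\ second index $1$ only. The only way to bring your conclusion within its scope is the trivial inclusion (weakly $1$-summable sequences are weakly $r$-summable), which downgrades your information to: $id_X$ is $(s;1)$-summing with $s=\frac{2r}{2-r}$, hence $X$ has cotype $s$ (note $s>2$ when $r>1$). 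That contradicts the fact that $\cot X=q$ is the \emph{infimum} of the cotypes of $X$ only when $s<q$, i.e.\ only when $r<\frac{2q}{q+2}$. Since $\frac{2q}{q+2}<q^{*}$ exactly when $q<4$, for every $2<q<4$ your verification says nothing on the whole subinterval $\frac{2q}{q+2}\leq r<q^{*}$; for instance, for $q=3$ you reach only $r<6/5$ while the theorem claims $r<3/2$. Also, your expectation that the obstruction holds ``precisely on $[1,q^{*})$ and not beyond'' is misplaced: nothing about the exponent pair $(\frac{2r}{2-r},r)$ singles out $q^{*}$.

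The paper avoids this entirely by spending the cotype hypothesis \emph{before} applying Theorem \ref{con}, not after. Since $r<q^{*}$ is an open condition and $X$ has cotype $q+\varepsilon$ for every $\varepsilon>0$, \cite[Corollary 11.16(b)]{DJT} gives $\Pi_{r}(X;Y)=\Pi_{1}(X;Y)$, and the factorization Lemma \ref{lemma} then yields $\mathcal{P}_{d,r}(^{2}X)=\mathcal{P}_{d,1}(^{2}X)$; this is the one and only place where $r<q^{*}$ is used. A coincidence $\mathcal{P}_{d,r}(^{2}X)=\mathcal{P}(^{2}X)$ would therefore force $\mathcal{P}_{d,1}(^{2}X)=\mathcal{P}(^{2}X)$, and Theorem \ref{con} applied only at $r=1$ (so $m=2$, $q=1/2$) gives that $id_{X}$ is $(2;1)$-summing, which is impossible for $\cot X>2$ by \cite[Theorem 14.5]{DJT}. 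Your direct route can in fact be completed, but only with a substantially heavier tool than anything you cite: by the Maurey--Pisier theorem $\ell_{q}$ is finitely representable in $X$, so $(s;r)$-summability of $id_{X}$ passes uniformly to $id_{\ell_{q}^{n}}$, and testing on the unit vector basis (weak $\ell_{r}$-norm $n^{1/r-1/q^{*}}$ against $\ell_{s}$-norm $n^{1/s}$) forces $\frac{1}{r}-\frac{1}{s}\geq\frac{1}{q^{*}}>\frac{1}{2}$, contradicting $\frac{1}{r}-\frac{1}{s}=\frac{1}{2}$. That argument rules out your exponent pair for every $1\leq r<2$, not just $r<q^{*}$, which confirms that the threshold $q^{*}$ in the theorem reflects the operator-ideal coincidence of \cite[Corollary 11.16(b)]{DJT} rather than the arithmetic of your exponents. (Your complex-case sketch does match the paper's.)
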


\begin{proof} Real case: Let $1\leq r<q^{\ast}$. Combining Lemma \ref{lemma} and \cite[Corollary 11.16(b)]{DJT}
it is immediate that $ \mathcal{P}_{d,r}(^{2}X)=\mathcal{P}_{d,1}(^{2}X) .$ If
$\mathcal{P}_{d,r}(^{2}X)= \mathcal{P}_{d,1}(^{2}X) =
\mathcal{P}(^{2}X),$ from Theorem \ref{con} we could conclude
that $id_{X}$ is $(2;1)$-summing, but this is impossible because
$\cot X>2.$ \\
Complex case: If $X$ is a complex Banach space, $\cot X=q>2$ and $1\leq r<q^{\ast}$, by \cite[Lemma 3.1]{bbjp} we know that $\cot X_{\mathbb{R}}=q>2$, so there is a
non-$r$-dominated polynomial $P \in {\cal P}(^2X_\mathbb{R})$. Denoting by $\widetilde{P}$ the
complexification of $P$ we have that $\widetilde{P} \in {\cal P}(^2X)$ and following the lines of
\cite[Proposition 4.30]{David} it is not difficult to prove that $\widetilde{P}$ fails to be $r$-dominated either.
\end {proof}

\begin{remark}\rm
\label{hhggff} Let $X$ be any of the spaces constructed by Pisier \cite{acta}. By Example \ref{ex} we know that $\mathcal{P}_{d,1}(^{2}X)= \mathcal{P}(^{2}X)$, which makes clear that  Theorem \ref{coro11} is sharp in the
sense that it is not valid for cotype $2$ spaces.
\end{remark}

\noindent {\bf Conjecture.} We conjecture that if $X$ is infinite-dimensional and $\mathcal{L}_{d,1}(^{2}X)= \mathcal{L}(^{2}X)$, then $X \tilde{\otimes}_\pi X = X \tilde{\otimes}_\varepsilon X$. Observe that for an infinite-dimensional space $X$ with $\mathcal{L}_{d,1}(^{2}X)= \mathcal{L}(^{2}X)$ and $X \tilde{\otimes}_\pi X \neq X \tilde{\otimes}_\varepsilon X$, if any, we should have:\\
$\bullet$ $X$ has no unconditional basis \cite[Theorem 3.2]{PAMS};\\
$\bullet$ $X$ has cotype $2 + \varepsilon$ for every $\varepsilon > 0$ (Proposition \ref{propos});\\
$\bullet$ $X$ does not have cotype 2 (Proposition \ref{prop});\\
$\bullet$ $X'$ is a GT space \cite[Theorem 3.4]{Jar};\\
$\bullet$ Every linear operator from $X$ to $X'$ is absolutely 1-summing (by \cite[Lemma 3.4]{B} this is a consequence of  $\mathcal{L}_{d,1}(^{2}X)= \mathcal{L}(^{2}X)$), in particular $X$ is Arens-regular;\\
$\bullet$ Not every linear operator from $X$ to $X'$ is integral (this is a consequence of $X \tilde{\otimes}_\pi X \neq X \tilde{\otimes}_\varepsilon X$).

\vspace*{1em} \noindent[Geraldo Botelho] Faculdade de Matem\'atica,
Universidade Federal de Uberl\^andia, 38.400-902 - Uberl\^andia, Brazil,
e-mail: botelho@ufu.br.

\medskip

\noindent[Daniel Pellegrino] Departamento de Matem\'atica, Universidade
Federal da Para\'iba, 58.051-900 - Jo\~ao Pessoa, Brazil, e-mail: dmpellegrino@gmail.com.

\medskip

\noindent[Pilar Rueda] Departamento de An\'alisis Matem\'atico, Universidad de
Valencia, 46.100 Burjasot - Valencia, Spain, e-mail: pilar.rueda@uv.es.

\end{document}